\newtheorem{theorem}{Theorem}[section]
\newtheorem{lemma}[theorem]{Lemma}
\newtheorem{proposition}[theorem]{Proposition}
\theoremstyle{definition}
\newtheorem{definition}[theorem]{Definition}
\numberwithin{equation}{section}
\renewcommand{\geq}{\geqslant}
\renewcommand{\leq}{\leqslant}
\title{Property($K^*$) Implies the Weak Fixed Point Property}
\author[Tim Dalby]{Tim Dalby}
\date{\today}
\keywords{weak fixed point property, $R(X), R(a, X)$, property($K^*$), nonstrict *Opial condition, Opial's modulus}
\subjclass[2010]{46B10, 47H09, 47H10}
\email{tim\_dalby@bigpond.com}
\begin{document}

\parindent = 0pt
\parskip = 8pt

\begin{abstract}

It is shown that if the dual of a separable Banach space has Property($K^*$) then the original space has the weak fixed point property.  This is an improvement of previously results.

\end{abstract}

\maketitle

\section{Introduction}

A Banach space, $X$, has the weak fixed point property, w-FPP, if every nonexpansive mapping, $T$, on every weak compact convex nonempty subset, $C$, has a fixed point.  If the subsets are closed and bounded instead of weak compact then the property is called the fixed point property, FPP.  The past 40 or so years has seen a number of Banach space properties shown to imply the w-FPP.  Some such properties are weak normal structure, Opial's condition, Property($K$)  and Property($M$).

Definitions of relevant terms and moduli are given in the next section.

In [4] Dalby showed that if $X$ is a separable Banach space where the dual, $X^*$, has Property($K^*$) and the nonstrict *Opial condition then $R(X) < 2$.  This last condition implies $X$ has the w-FPP.  For the latter result see [9].

In [1] Benavides showed that if $X$ is a reflexive Banach space and if there exist $c_0 \in (0, 1)$ where $r_{X^*}(c_0) > 0$ then $R(a, X) \leq \max \left \{ 1 + ac_0, a + \frac{\displaystyle 1}{\displaystyle 1 + r_{X^*}(c_0)} \right \}$ where $a \geq 0.$  This in turn implies $R(a, X) < 1 + a \mbox{ if } r_{X^*}(1) > 0.$  Here $r(c)$ is Opial's modulus. Earlier in that paper it was shown that $R(a, X) < 1 + a \mbox{ for some } a > 0$ ensures that $X$ has the w-FPP.  Because of reflexivity, in this case $X$ has the FPP.

Recall from [7] that $R(a, X) < 1 + a$ for some $a > 0$ is equivalent to $R(1, X) < 2$ and that $R(X) < 2$ implies $R(1, X) < 2.$

First, it is proved below that the condition that there exist $c_0 \in (0, 1)$ where $r_{X^*}(c_0) > 0$ is equivalent to $r_{X^*}(1) > 0.$

So the result in [1] can be rewritten as: If $X$ is a reflexive Banach space where $r_{X^*}(1) > 0$ then $X$ has the FPP.   But Dalby proved in [3] that $r_{X}(1) > 0$ is equivalent to $X$ having Property($K$).  This proof can readily be transferred to $X^*$.  That is $r_{X^*}(1) > 0$ is equivalent to $X^*$ having Property($K^*$).  Hence the result in [1] can be again rewritten as: If $X$ is a reflexive Banach space where $X^*$ has Property($K^*$) then $X$ has the FPP.

So the results of Benavides and Dalby are similar.

Second, this paper shows that if $X$ is a separable Banach space where the dual, $X^*$, has Property($K^*$) then $X$ has the w-FPP.   Thus the nonstrict *Opial condition has been removed and reflexive changed to separable.  The proof is also direct, bypassing reference to $R(X)$ or $R(a, X) .$  The idea is to make the mathematics behind the conclusion a little clearer.

\section{Definitions}

\begin{definition}

Sims, [15]

A Banach space $X$ has property($K$) if there exists $K \in [0, 1)$ such that whenever $x_n \rightharpoonup 0, \lim_{ n \rightarrow \infty} \| x_n \| = 1 \mbox{ and }  \liminf_{n \rightarrow \infty} \| x_n - x \|  \leq 1 \mbox{ then } \| x \| \leq K.$

If the sequence is in $B_{X^*}$ and is weak* convergent to zero then the property is called Property($K^*$).

\end{definition}

\begin {definition}

Opial [14] 

A Banach space has Opial's condition if
\[ x_n \rightharpoonup 0 \ \mbox {and } x \not = 0 \mbox { implies } \limsup_n \| x_n \| < \limsup_n \| x_n - x \|. \]
The condition remains the same if both the $\limsup$s are replaced by $\liminf$s.

If the $<$ is replaced by $\leq$ then the condition is called nonstrict Opial.

In $X^*$ the conditions are the same except that the sequence is weak* null.

\end {definition}

Later a modulus was introduced to gauge the strength of Opial's condition and a stronger version of the condition was defined.

\begin{definition}

Lin, Tan and Xu, [13]

Opial's modulus is
\[ r_X(c) = \inf \{ \liminf_{n\rightarrow \infty} \| x_n - x \| - 1: c \geq 0, \| x \| \geq c,  x_n \rightharpoonup 0 \mbox{ and }\liminf_{n\rightarrow \infty} \| x_n \| \geq 1 \}. \]
$X$ is said to have uniform Opial's condition if $r_X(c) > 0$ for all $c > 0.$  See [13] for more details.

\end{definition}

\section{Results}

\begin{lemma}
Let $X$ be a Banach space then the condition that there exists a $c_0 \in (0, 1)$ where $r_X(c_0) > 0$ is equivalent to $r_X(1) > 0.$
\end{lemma}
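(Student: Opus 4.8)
The plan is to prove the two implications separately; one is immediate from monotonicity of Opial's modulus, the other requires only a small normalization trick.

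For the direction "$r_X(c_0) > 0$ for some $c_0 \in (0,1)$ $\Rightarrow$ $r_X(1) > 0$", I would first record that $c \mapsto r_X(c)$ is nondecreasing: enlarging $c$ shrinks the family of admissible pairs $((x_n),x)$, namely those with $x_n \rightharpoonup 0$, $\liminf_n \|x_n\| \ge 1$ and $\|x\| \ge c$, over which the infimum is taken. Hence $r_X(1) \ge r_X(c_0) > 0$, and this half is done.

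For the converse, suppose $r_X(1) > 0$ and set $\delta := r_X(1)$. I would choose $c_0 \in (0,1)$ close enough to $1$ that $1 - c_0 < \delta$, and claim $r_X(c_0) \ge \delta - (1-c_0) > 0$. Take any admissible pair $((x_n),x)$ for $r_X(c_0)$, so $x_n \rightharpoonup 0$, $\liminf_n \|x_n\| \ge 1$ and $\|x\| \ge c_0 > 0$. If $\|x\| \ge 1$ the bound $\liminf_n\|x_n - x\| - 1 \ge \delta \ge \delta - (1-c_0)$ is immediate from the definition of $r_X(1)$. If $c_0 \le \|x\| < 1$, replace $x$ by its normalization $\tilde x := x/\|x\|$: then $\|\tilde x\| = 1$, so $((x_n),\tilde x)$ is admissible for $r_X(1)$ and $\liminf_n \|x_n - \tilde x\| \ge 1 + \delta$; since $\|x - \tilde x\| = 1 - \|x\| \le 1 - c_0$, the triangle inequality gives $\|x_n - x\| \ge \|x_n - \tilde x\| - (1-c_0)$ for every $n$, whence $\liminf_n \|x_n - x\| \ge 1 + \delta - (1-c_0)$. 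In either case $\liminf_n\|x_n - x\| - 1 \ge \delta - (1-c_0)$, and taking the infimum over admissible pairs yields $r_X(c_0) \ge \delta - (1-c_0) > 0$.

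The only real obstacle is spotting the normalization $x \mapsto \tilde x = x/\|x\|$ in the converse, together with the observation that the resulting perturbation $\|x - \tilde x\| = 1 - \|x\|$ is at most $1 - c_0$, which is smaller than $\delta = r_X(1)$ once $c_0$ is taken near $1$; everything else is monotonicity and the triangle inequality. It is also worth noting the degenerate case $\dim X < \infty$, where there are no admissible sequences at all, so $r_X(c) = +\infty$ for every $c$ and the equivalence holds vacuously.
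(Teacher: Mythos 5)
Your proof is correct, and the interesting half of it takes a genuinely different route from the paper. The forward implication (monotonicity of $c \mapsto r_X(c)$, hence $r_X(1) \ge r_X(c_0) > 0$) is exactly the paper's argument. For the converse, however, the paper simply invokes the continuity of $r_X$ established in Lin--Tan--Xu [13], whereas you give a self-contained argument: normalize $x$ to $\tilde x = x/\|x\|$, apply the definition of $r_X(1)$ to the pair $((x_n),\tilde x)$, and absorb the perturbation $\|x-\tilde x\| = 1-\|x\| \le 1-c_0$ by the triangle inequality. This yields the explicit Lipschitz-type estimate $r_X(c) \ge r_X(1) - (1-c)$ for $c \in (0,1]$, so no external continuity result is needed. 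It is worth noting that your estimate is also quantitatively sharper than what the paper extracts in its subsequent Lemma 3.2 from the inequality $\dag$ of [13]: that lemma guarantees $r_X(c)>0$ only for $c > \frac{1}{r_X(1)+1}$, whereas your bound gives $r_X(c)>0$ for all $c > 1 - r_X(1)$, and $1-\delta < \frac{1}{1+\delta}$ whenever $\delta = r_X(1)>0$, so your threshold is strictly better. Your closing remark about the degenerate case (no admissible sequences, infimum over the empty set) is a reasonable piece of bookkeeping that the paper leaves implicit.
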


\begin{proof}

Note that $r_X(c)$ is a nondecreasing function of $c.$  So if there exists a $c_0 \in (0, 1)$ where $r_X(c_0) > 0$ then $r_X(1) \geq r_X(c_0) > 0.$

In [13] it was proved that $r_X(c)$ is continuous so if $r_X(1) > 0$ then by the continuity of $r_X(c)$ there exists a $c$ such that $0 < c < 1$ and $r_X(c) > 0.$

\end{proof}

Out  of simple curiosity a more accurate bound on such a $c$ can be found.  Again in [13] the following inequality was proved for all $0 < c_1 \leq c_2$
\[ r_X(c_2) - r_X(c_1) \leq (c_2 - c_1) \frac{r_X(c_1)  + 1}{c_1}. \qquad \dag \]

\begin{lemma}
Let $X$ be a Banach space with $r_X(1) > 0.$  If $1 > c > \frac{\displaystyle 1}{\displaystyle r_X(1) + 1}$ then $r_X(c) > 0.$
\end{lemma}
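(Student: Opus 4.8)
The plan is to apply the inequality $\dag$ with the choices $c_1 = c$ and $c_2 = 1$. This is legitimate because the hypothesis gives $0 < c < 1$, so indeed $0 < c_1 \leq c_2$. Substituting into $\dag$ yields
\[ r_X(1) - r_X(c) \leq (1 - c)\,\frac{r_X(c) + 1}{c}. \]
From this point the argument is pure elementary algebra: multiply both sides by $c$ (a strictly positive number, so the direction of the inequality is unchanged), expand the right-hand side, and collect all the terms involving $r_X(c)$ on the left. This rearrangement gives the lower bound
\[ r_X(c) \geq c\bigl(r_X(1) + 1\bigr) - 1. \]

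It then remains only to invoke the hypothesis on $c$. Since $r_X(1) > 0$ we have $r_X(1) + 1 > 0$, so the assumption $c > \frac{1}{r_X(1) + 1}$ is equivalent to $c\bigl(r_X(1) + 1\bigr) > 1$, i.e. $c\bigl(r_X(1) + 1\bigr) - 1 > 0$. Combining this with the displayed lower bound gives $r_X(c) > 0$, as required.

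I do not expect any genuine obstacle here; the only point needing a moment's care is to confirm that the manipulation of $\dag$ preserves the inequality, which is immediate since the sole quantity one multiplies or divides by is $c > 0$. It is perhaps worth noting that this lemma is a quantitative sharpening of Lemma 3.1 in the ``$r_X(1) > 0 \Rightarrow$ some $c_0$'' direction: because $\frac{1}{r_X(1)+1} < 1$ whenever $r_X(1) > 0$, the interval of admissible $c$ is nonempty, and any such $c$ serves as the $c_0 \in (0,1)$ with $r_X(c_0) > 0$.
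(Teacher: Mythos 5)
Your proposal is correct and follows essentially the same route as the paper: both apply $\dag$ with $c_1 = c$, $c_2 = 1$ and rearrange algebraically using the hypothesis $c\bigl(r_X(1)+1\bigr) > 1$. Your rearrangement to the explicit lower bound $r_X(c) \geq c\bigl(r_X(1)+1\bigr) - 1$ is in fact slightly cleaner than the paper's manipulation involving $\frac{c}{1-c}$, but the underlying argument is identical.
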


\begin{proof}
Let $1 > c > \frac{\displaystyle 1}{\displaystyle r_X(1) + 1}$ then 
\begin{align*}
c(r_X(1) + 1)   & > 1 \\
r_X(1)  &  > \frac{1 - c}{c} > 0 \\
\frac{c}{1 - c}r_X(1) & > 1.  \qquad \dag\dag 
\end{align*}

In $\dag$ let $c_2 = 1$ and $c_1 =  c$ then
\begin{align*}
r_X(1) - r_X(c) & \leq (1 - c) \frac{r_X(c)  + 1}{c} \\
r_X(1) & \leq \frac{1 - c}{c}(r_X(c) + 1) + r_X(c)  \\
0 < \frac{c}{1 - c}r_X(1) & \leq r_X(c) + 1 +  \frac{c}{1 - c}r_X(c) \\
& = \frac{r_X(c)}{1 - c} + 1 \\
\frac{c}{1 - c}r_X(1) - 1 & \leq  \frac{r_X(c)}{1 - c}.
\end{align*}

This last inequality combined with the one in $\dag\dag$ gives $r_X(c) > 0.$

\end{proof}

\begin{proposition}
Let $X$ be a separable Banach space with $X^*$ having property($K^*$) then $X$ has the w-FPP.
\end{proposition}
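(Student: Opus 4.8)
The plan is to argue by contradiction, following the classical reduction to a minimal invariant set and then transferring the obstruction into $X^*$, where Property($K^*$) can be applied; separability is what makes this transfer possible. First I would assume $X$ fails the w-FPP and use Zorn's lemma to produce a nonempty weakly compact convex set $C\subseteq X$ together with a fixed-point-free nonexpansive $T\colon C\to C$ for which $C$ is minimal among nonempty weakly compact convex $T$-invariant sets; minimality rules out $C$ being a singleton. Rescaling, assume $\operatorname{diam}C=1$. The standard contraction-approximation argument (applied to $T_k=\tfrac1k x_0+(1-\tfrac1k)T$) yields an approximate fixed point sequence $(x_n)\subset C$, and by weak compactness together with the Eberlein--\v{S}mulian theorem, after passing to a subsequence and translating $C$, we may assume $x_n\rightharpoonup 0\in C$ and $\|x_n-Tx_n\|\to 0$. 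The Goebel--Karlovitz lemma then gives $\lim_n\|x_n-x\|=\operatorname{diam}C=1$ for every $x\in C$; in particular $\|x_n\|\to 1$, and for every $w\in C$ the sequence $(x_n-w)$ converges weakly to $-w$, has norms tending to $1$, and satisfies $\|(x_n-w)-(-w)\|=\|x_n\|\to1$.

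Next I would move to the dual. For each $n$ choose $f_n\in S_{X^*}$ with $f_n(x_n)=\|x_n\|$; since $X$ is separable, $B_{X^*}$ is weak* sequentially compact, so along a subsequence $f_n\xrightarrow{\,w^*\,}f$ with $\|f\|\le1$. Put $g_n=f_n-f$, a weak*-null sequence with $g_n(x_n)=\|x_n\|-f(x_n)\to1$, so that $\liminf_n\|g_n\|\ge1$; passing to a further subsequence, $\|g_n\|\to L$ for some $L\ge1$, and since $g_n/\|g_n\|=f_n/\|g_n\|-f/\|g_n\|$ one gets a weak*-null sequence in $B_{X^*}$ with norms tending to $1$ and $\liminf_n\bigl\|\,g_n/\|g_n\|-(-f/L)\,\bigr\|=1/L\le1$, whence Property($K^*$) forces $\|f\|\le KL$. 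Running the same construction with $(x_n)$ replaced by $(x_n-w)$ for $w\in C$ produces, for each such $w$, a weak* limit $h^w$ of norming functionals subject to analogous constraints; the idea is to use the diametral equality from the Goebel--Karlovitz lemma to select $w$ inside $C$ so that these constraints, together with the bounds just obtained, become mutually incompatible, which is the contradiction.

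The main obstacle is precisely this last coupling: one must exhibit a single weak*-null sequence in $B_{X^*}$ and a single element of $X^*$ such that the hypothesis $\liminf\|\cdot-\text{element}\|\le1$ of Property($K^*$) genuinely holds, while its conclusion (norm $\le K<1$) contradicts a lower bound that the Goebel--Karlovitz equality and the normalizations above force on that element's norm. Keeping the several normalizing constants compatible — so that the ``$\le1$'' hypothesis survives and the ``$\le K$'' conclusion bites — is where the argument is delicate, and it is exactly here that separability (rather than reflexivity) and the quantitative form of Property($K^*$) are used, and where the nonstrict *Opial condition needed in [4] is dispensed with.
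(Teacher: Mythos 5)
Your overall strategy --- norming functionals $f_n$ for an approximate fixed point sequence, weak* sequential compactness of $B_{X^*}$ coming from separability, and an application of Property($K^*$) to the weak*-null differences $f_n-f$ --- is exactly the engine of the paper's proof. But the proposal has a genuine gap, and you have in fact located it yourself: the ``last coupling'' that you defer is the entire content of the argument, and the setup you chose makes it unavailable. Working with the approximate fixed point sequence $(x_n)$ translated so that $x_n\rightharpoonup 0$ in the minimal set $C$ of diameter $1$, the Goebel--Karlovitz lemma gives $\lim_n\|x_n-x\|=1$ for every $x\in C$ and $\limsup_m\limsup_n\|x_m-x_n\|=1$; feeding this into the duality estimate $\limsup_m\limsup_n\|x_m-x_n\|\geq|\limsup_m f(x_m)-\lim_n\|x_n\||$ only yields $\lim_n\|x_n\|\leq 1$, which is no contradiction no matter how small Property($K^*$) forces $\|f\|$ to be (and here $f(x_m)\to 0$ anyway, since the weak limit is $0$). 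Likewise, replacing $(x_n)$ by $(x_n-w)$ gains nothing, because the diametral equality $\lim_n\|x_n-w\|=1$ holds identically for every $w\in C$ and so offers no leverage. The paper avoids this dead end by not stopping at the Goebel--Karlovitz sequence: it invokes the further construction of Dom\'{i}nguez-Benavides, Garc\'{i}a-Falset and Dowling--Randrianantoanina--Turett (the ``excursion to $l_\infty(X)/c_0(X)$ and back'') to produce, for each $t\in(0,1)$, a sequence $w_n\rightharpoonup w$ with $\|w\|\leq 1-t$, $\limsup_n\|w_n-w\|=t$ and separation constant $t$, while $\lim_n\|w_n\|$ must stay close to $1$. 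The nonzero weak limit $w$ and the small asymptotic diameter $t$ are precisely what give the resulting inequality $\lim_n\|w_n\|\leq t+w^*(w)$ its teeth.

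The second problem is quantitative. Your application of Property($K^*$) to $g_n/\|g_n\|$ with the element $-f/L$ gives only $\|f\|\leq KL$, and since $L=\lim_n\|f_n-f\|$ can be as large as $1+\|f\|$, this is $\|f\|\leq 2K$, which need not be less than $1$ and is not by itself incompatible with anything you have established. The paper resolves exactly this normalization difficulty by splitting into two cases according to whether $\liminf_n\|w_n^*-w^*\|\geq\|w^*\|$ or not, and by using the modulus form of the hypothesis ($r_{X^*}(1)>0$, shown earlier to be equivalent to Property($K^*$)) rather than the raw definition: in the first case one normalizes by $\|w^*\|$ and concludes $\|w^*\|\leq 1/(r_{X^*}(1)+1)$, hence $w^*(w)\leq(1-t)/(r_{X^*}(1)+1)$; in the second one normalizes by $a=\liminf_n\|w_n^*-w^*\|$ and concludes $a\leq 1/(r_{X^*}(1)+1)$, hence $\lim_n\|w_n\|-w^*(w)\leq at\leq t/(r_{X^*}(1)+1)$. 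Either way $\lim_n\|w_n\|$ is bounded away from $1$ uniformly, which is the contradiction. Without the refined sequence $(w_n)$ and without this two-case split (or some equivalent device for keeping the normalizing constants under control), the constraints you list do not become mutually incompatible, so the proof is not complete.
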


\begin{proof}

Let $X$ be a separable Banach space with $X^*$ having property($K^*$). So $r_{X^*}(1) > 0$.

The usual set up is to assume $X$ does not have the w-FPP and arrive at a contradiction.  Using results from Goebel [10], Karlovitz [11] and Lin [12] plus an excursion to $l_\infty(X)/c_0(X)$ and then back to $X$ it can be shown that, given $t \in (0, 1),$ there exists a sequence, $w_n,$ with the following properties.

\begin{enumerate}
\item $ w_n \rightharpoonup w$
\item $\| w \| \leq 1 - t$
\item $\limsup_{n \rightarrow \infty}\| w_n - w \| = t$
\item $D[(w_n)] = \limsup_{m \rightarrow \infty}\limsup_{n \rightarrow \infty}\| w_m - w_n \| = t$
\item $\lim_{n \rightarrow \infty}\| w_n \| \geq t.$
\end{enumerate}

We need to show that $\lim_{n \rightarrow \infty}\| w_n \|$ is uniformly away from 1.

This approach can be found in Garc\'{i}a-Falset [9], Dom\'{i}nguez-Benavides [1] and Dowling, Randrianantoanina and Turett [2].  The latter paper goes a bit further and has some additional inequalities that look interesting.

For each $n \geq 1$ choose $w_n^* \in X^* \mbox{ such that } \| w_n^* \| = 1 \mbox{ and }w_n^*(w_n) = \| w_n \|$.  Using the property that $X$ is separable we can assume, without loss of generality, that $w_n^* \stackrel{*}{\rightharpoonup} w^* \mbox { where } \| w^* \| \leq 1$.

{\bf Remark:} We need either $w^*$  `deep' within the dual unit ball or $w_n^* - w^*$ eventually `deep' within the dual unit ball. The condition $r_{X^*}(1) > 0$ ensures this.
\begin{align*}
 t & = \limsup_{m \rightarrow \infty}\limsup_{n \rightarrow \infty}\| w_m - w_n \| \\
 & \geq | \limsup_{m \rightarrow \infty}\limsup_{n \rightarrow \infty} w_n^*(w_m - w_n) | \\
 & = | \limsup_{m \rightarrow \infty} w^*(w_m) - \lim_{n \rightarrow \infty}\| w_n \| | \\
 & = | w^*(w) - \lim_{n \rightarrow \infty}\| w_n \| |. \qquad \#
\end{align*}

If $\lim_{n \rightarrow \infty} \| w_n \| \leq w^*(w)$ then $\lim_{n \rightarrow \infty}\| w_n \| \leq \| w^* \| \| w \| \leq 1 - t$ and we are done.  We have the required contradiction.

If $w^* = 0$ then $\lim_{n \rightarrow \infty} \| w_n \| \leq t$ and we again done. So assume
\[ \lim_{n \rightarrow \infty} \| w_n \| > w^*(w) \mbox{ and } w^* \neq 0. \]
Another possibility that can be discarded is $w_n^* \rightarrow w^*$.  Because if $w_n^*$ is norm convergent then
\begin{align*}
0 & = \lim_{n \rightarrow \infty}\| w_n^* - w^* \|\| w_n - w \| \\
& \geq \lim_{n \rightarrow \infty} (w_n^* - w^*)(w_n - w) \\
& = \lim_{n \rightarrow \infty}\| w_n \| - w^*(w).
\end{align*}

Thus $\lim_{n \rightarrow \infty}\| w_n \| \leq w^*(w) \leq 1 - t.$
 
Now back to the main thread of the proof.  From \# we have the inequality
\[ \lim_{n \rightarrow \infty}\| w_n \| \leq t + w^*(w). \qquad \#\# \]

The proof is now split into two cases.
 
{\bf Case 1}    $\quad \liminf_{n \rightarrow \infty}\| w_n^* - w^* \| \geq \| w^* \|.$
 
Here
\[ \frac{w_n^* - w^*}{\| w^* \|}\stackrel{*}{\rightharpoonup} 0 \mbox{ and }\liminf_{n \rightarrow \infty} \left \| \frac{w_n^* - w^*}{\| w^* \|}\right \| \geq 1. \]
Thus
\[ \frac{1}{\| w^* \|} = \frac{\lim_{n \rightarrow \infty}\| w_n^* \|}{\| w^* \|} = \lim_{n \rightarrow \infty}\left \| \frac{w_n^* - w^*}{\| w^* \|} + \frac{w^*}{\| w^* \|}\right \| \geq r_{X^*}(1) + 1. \]

This leads to 
\[ \| w^* \| \leq \frac{1}{r_{X^*}(1) + 1}. \]

Using this inequality in \#\# we have
\begin{align*}
\lim_{n \rightarrow \infty}\| w_n \| & \leq t + w^*(w) \\
& \leq t + \| w^* \| \| w \| \\
& \leq t +\frac{1}{r_{X^*}(1) + 1}(1 - t) \\
& < 1.
\end{align*}
We now have the required contradiction.

{\bf Case 2} $\quad \liminf_{n \rightarrow \infty}\| w_n^* - w^* \| < \| w^* \|.$

For ease of notation let $a: = \liminf_{n \rightarrow \infty}\| w_n^* - w^* \|.$  Thus $a > 0.$

Then
       \[ \frac{w_n^* - w^*}{a} \stackrel{*} \rightharpoonup 0, \]
       \[ \liminf_{n \rightarrow \infty}\left \| \frac{w_n^* - w^*}{a}\right \| = 1 \mbox{ and } \]
       \[ 1 < \frac{\| w^* \|}{a}. \]
So
\begin{align*}
r_{X^*}(1) + 1 & \leq
\liminf_{n \rightarrow \infty}\left \| \frac{w_n^* - w^*}{a} + \frac{w^*}{a}\right \| \\
& = \lim_{n \rightarrow \infty}\left \| \frac{w_n^*}{a}\right \|.
\end{align*}

Thus $1 = \lim_{n \rightarrow \infty}\| w_n^* \| \geq  a(r_{X^*}(1) + 1).$

That is
\[ a = \liminf_{n \rightarrow \infty}\| w_n^* - w^* \| \leq \frac{1}{r_{X^*}(1) + 1}. \]

Hence
\begin{align*}
\frac{t}{r_{X^*}(1) + 1 } & \geq \liminf_{n \rightarrow \infty}\| w_n^* - w^* \|\| w_n - w \| \\
& \geq \liminf_{n \rightarrow \infty}(w_n^* - w^*)(w_n - w) \\
& = \lim_{n \rightarrow \infty}\| w_n \| - w^*(w).
\end{align*}
So
\begin{align*}
\lim_{n \rightarrow \infty}\| w_n \| & \leq \frac{t}{r_{X^*}(1) + 1} + w^*(w) \\
& \leq \frac{t}{r_{X^*}(1) + 1} + \| w^* \|\| w \| \\
& \leq \frac{t}{r_{X^*}(1) + 1} + (1 - t) \\
&  <  1.
\end{align*}

And we have the final contradiction.

Thus $X$ has the w-FPP. 

\end{proof}

{\bf Remarks}

\begin{enumerate}

\item[1.] In [15] Sims showed that Property($K$) implies weak normal structure.  That proof readily transfers to the dual. So when $X^*$ has Property($K^*$) it has weak* normal structure and hence the w*-FPP.

\item[2.] Dalby and Sims in [8] showed that if $X$ is a Banach space and $r_X(1) > 0$ then $c_0 \mathrel{\not \hookrightarrow} X.$  So when $X^*$ has Property($K^*$) then $c_0 \mathrel{\not \hookrightarrow} X^*.$  The implications resulting from $c_0$ not being in $X^*$ are unclear.

\item[3.] A very similar proof to the one above proves $R(X) \leq 1 + \frac{\displaystyle 1}{\displaystyle r_{X^*}(1) + 1} < 2$.  See [5].  Garci\'{a}-Falset in [9] proved that $R(X) < 2$ implies the w-FPP.  So this provides another route to the same conclusion.

\item[4.] As mentioned within the proof, $r_{X^*}(1) > 0$ ensures either $w^*$ is deep within the dual unit ball or $w_n^* - w^*$ eventually deep within the dual unit ball.  It may seem natural to by-pass $r_{X^*}(1) > 0$ and use two new moduli that achieves this aim directly.  Say
\[\qquad \qquad D(X) := \sup\{ \liminf_{n \rightarrow \infty}\| x_n - x \|: x_n \rightharpoonup x, \| x_n \|  = 1 \mbox{ for all } n\}.\]

and
\[\qquad \qquad \hat{D}(X) := \sup\{ \| x \|: x_n \rightharpoonup x, \| x_n \| = 1 \mbox{ for all } n\}.\]
Then, using the above proof, if either $D(X^*) < 1$ or $\hat{D}(X^*) < 1$ then $X$ has the w-FPP.  Here $D(X^*)$ and $\hat{D}(X^*)$ are defined using weak* convergent sequences.  Unfortunately, it was shown in [6] that either one of $D(X^*) < 1$ or $\hat{D}(X^*) < 1$ implies Property($K^*$) so there is no advantage in going down that path.

\end{enumerate}

\end{document}